\theoremstyle{definition}
\newtheorem{defi}{Definition}[section]
\newtheorem{remark}[defi]{Remark}
\newtheorem{notation}[defi]{Notation}
\theoremstyle{plain}
\newtheorem{theorem}[defi]{Theorem}
\newtheorem{corollary}[defi]{Corollary}
\newtheorem{lemma}[defi]{Lemma}
\newtheorem{proposition}[defi]{Proposition}
\begin{document}

\title[Fundamental group scheme of rationally chain connected varieties]{On the
fundamental group scheme of rationally chain connected varieties}

\author[M. Antei]{Marco Antei}

\address{Laboratoire J.A.Dieudonn\'e, UMR CNRS-UNS No 7351
Universit\'e de Nice Sophia-Antipolis, Parc Valrose,
06108 NICE Cedex 2, France}

\email{Marco.ANTEI@unice.fr}

\author[I. Biswas]{Indranil Biswas}

\address{School of Mathematics, Tata Institute of Fundamental
Research, Homi Bhabha Road, Bombay 400005, India}

\email{indranil@math.tifr.res.in}

\subjclass[2000]{14M22, 14H30}

\begin{abstract}
Let $k$ be an algebraically closed field.
Chambert-Loir proved that the \'etale fundamental group of a proper normal
rationally chain connected variety over $k$ is finite. 
We prove that the fundamental group scheme of a proper normal rationally chain connected
variety over $k$ is finite too. In particular, the
fundamental group scheme of a Fano variety is finite.
\end{abstract}

\maketitle

\section{Introduction}

Let $X$ be a scheme over an algebraically closed field $k$ of
characteristic $p$, with $p\, >\, 0$. Fix a $k$-point $x$ of $X$. Nori
introduced the notion of fundamental group scheme $\pi(X,x)$ in \cite{N1}
and further developed it in \cite{N2}. Since then the
fundamental group scheme is being
studied and in the process has turned into
an important tool in algebraic geometry of positive
characteristic. In \cite{N2} Nori proves that
$\pi(X,x)$ is trivial for proper rational normal varieties. More generally,
$\pi(X,x)\,=\, 0$ if $X$ is separably rationally connected \cite{Bi}.
Zhu proves that a general Fano (proper, smooth, connected with
ample anticanonical bundle) hypersurfaces in projective spaces are
separably rationally connected \cite{Zh}. Therefore, the fundamental
group schemes of general Fano hypersurfaces in a projective space
are trivial.

In \cite{CL} Chambert-Loir proves that every proper rationally
chain connected normal variety has finite \'etale
fundamental group, and its order is coprime to $p$
(the characteristic of $k$) \cite{CL2}. This result can also be obtained as a
consequence of \cite[Theorem 1.5]{Ko} and \cite[Theorem 1.6]{Ko}. Shioda gave an example of a rationally
connected variety over a field of characteristic $p\,\neq\, 5$ whose \'etale
fundamental group is $\mathbb{Z}/5\mathbb{Z}$ \cite{Sh}. Examples of rationally chain connected varieties whose local 
fundamental group scheme is not trivial are also known.
For example, a supersingular Enriques surface $E$ over an algebraically 
closed field of characteristic $2$ is unirational (see
\cite[Corollary 1.3.1]{CD}), hence it is rationally 
chain connected. It is known that there exists a nontrivial
$\alpha_2$--torsor over $E$ (see \cite[Chapter I, \S 3]{CD}). 

We prove the following (see Theorem \ref{teoMAIN} and Remark \ref{remFANO}):

\begin{theorem}
Let $k$ be an algebraically closed field and $X$
a proper normal, rationally chain connected $k$--scheme. Let $x\,\in\, X(k)$ be a point.
Then the fundamental group scheme $\pi(X,x)$ is finite.
\end{theorem}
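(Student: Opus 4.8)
The plan is to split $\pi(X,x)$ into its étale and local parts, dispose of the étale part by Chambert-Loir's theorem, and bound the local part by coherent-cohomological invariants of the proper scheme $X$, using $\pi(\mathbb{P}^1_k,\ast)=0$ to keep everything within the class of proper rationally chain connected schemes along the way.

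I would first reduce to the case $\pi^{\text{\'et}}(X,x)=0$. By Chambert-Loir's theorem \cite{CL} recalled above, $\pi^{\text{\'et}}(X,x)$ is finite, so there is a connected finite \'etale Galois cover $g\colon X'\to X$, with group $\pi^{\text{\'et}}(X,x)$, such that $\pi^{\text{\'et}}(X',x')=0$. The scheme $X'$ is again proper and normal, and it is still rationally chain connected: the pull-back to $X'$ of a non-constant $\varphi\colon\mathbb{P}^1\to X$ is a finite \'etale cover of $\mathbb{P}^1$, hence a disjoint union of copies of $\mathbb{P}^1$, so any chain of rational curves in $X$ lifts to one in $X'$. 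Since $\pi(X',x')$ is a closed subgroup scheme of $\pi(X,x)$ of finite index $|\pi^{\text{\'et}}(X,x)|$, finiteness of $\pi(X',x')$ forces finiteness of $\pi(X,x)$; so I may assume $\pi^{\text{\'et}}(X,x)=0$, i.e.\ $\pi(X,x)$ has no nontrivial \'etale quotient (equivalently, is pro-local). The crucial point for the induction to come is that this class of schemes is stable under connected pointed local torsors: if $h\colon Y\to X$ is a connected pointed torsor under a finite \emph{local} $k$-group scheme $G$, then $h$ is a finite universal homeomorphism, so $Y$ is proper and $\pi^{\text{\'et}}(Y,y)\cong\pi^{\text{\'et}}(X,x)=0$; and $Y$ is rationally chain connected, because for every non-constant $\varphi\colon\mathbb{P}^1\to X$ the pull-back torsor $\varphi^{\ast}Y\to\mathbb{P}^1$ is trivial --- indeed $\pi(\mathbb{P}^1_k,\ast)=0$, this being the case $X=\mathbb{P}^1$ of \cite{Bi} --- so its reduction is a copy of $\mathbb{P}^1$ mapping onto $\varphi(\mathbb{P}^1)$ inside $Y$, and, $h$ being a bijection, the lifted curves meet exactly where the original curves do, so chains lift to $Y$. (If normality of $Y$ is needed one passes to its normalization, which is still proper and rationally chain connected, and by Chambert-Loir has finite \'etale fundamental group, so the reduction step can be reapplied.)

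It then remains to bound $|G|$ uniformly over all connected pointed local $G$-torsors $Y\to X$, and here the two hypotheses enter in distinct ways. Properness of $X$ makes $H^1(X,\mathcal{O}_X)$, and more generally the Witt-vector cohomology $H^1(X,W\mathcal{O}_X)$, of finite length, which controls the unipotent ($\alpha_p$- and $W_n[F]$-) local torsors. Rational chain connectedness forces the Albanese of $X$ to be trivial, hence $(\mathrm{Pic}^0_X)_{\mathrm{red}}=0$, so that $\mathrm{Pic}^0_X$ is a \emph{finite} group scheme; since $\mathrm{NS}(X)$ is finitely generated, this bounds the multiplicative ($\mu_{p^n}$-) local torsors \emph{independently of $n$}. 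For a general finite local $G$ I would filter it by its Frobenius kernels $G[F^i]$, whose graded pieces are of height one, and iterate the reduction of the previous paragraph along the corresponding tower of intermediate covers (which remain proper, rationally chain connected and with trivial \'etale $\pi$), reducing the bound on $|G|$ to the two abelian estimates above by a finite d\'evissage. I expect the real obstacle to be precisely the finiteness of this d\'evissage: proving that the \emph{height} of $\pi(X,x)$ is bounded, that is, that there is no infinite tower of connected purely inseparable torsors over $X$. This is where rational chain connectedness must be used in an essential, non-formal way --- through the finiteness of $\mathrm{Pic}^0_X$, and, via Artin--Mazur, of $H^1(X,W\mathcal{O}_X)$, which together rule out such a tower --- in contrast with the \'etale situation treated by Chambert-Loir, where no infinite tower can occur in the first place.
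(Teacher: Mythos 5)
Your opening reduction --- using Chambert-Loir's theorem to kill the \'etale part and observing that chains of rational curves lift through finite \'etale covers --- is exactly the paper's route (Lemma \ref{lemLAST} and Remark \ref{remFANO}), and that part is sound. The genuine gap is that you never actually prove the heart of the theorem, namely the finiteness of the local part (Theorem \ref{teoMAIN}). Your plan is to bound local torsors by commutative cohomological invariants ($H^1(X,\mathcal{O}_X)$, $H^1(X,W\mathcal{O}_X)$, $\mathrm{Pic}^0_X$) and then handle a general finite local $G$ by d\'evissage along its Frobenius filtration. This breaks down at three points. (i) Those invariants only see torsors under \emph{commutative} local group schemes ($\alpha_{p}$, $W_n[F^m]$, $\mu_{p^n}$), whereas the graded pieces $G[F^{i+1}]/G[F^i]$ of your filtration are merely of height one, not abelian, so your ``two abelian estimates'' do not cover them. (ii) The d\'evissage forces you to bound torsors over the intermediate covers $Y_i$, and $\mathrm{Pic}^0_{Y_i}$ and $H^1(Y_i,W\mathcal{O}_{Y_i})$ are invariants of $Y_i$, not of $X$; nothing in your argument keeps them uniformly bounded as the tower grows. (iii) You yourself identify the termination of the tower --- boundedness of the height of $\pi(X,x)$ --- as ``the real obstacle,'' and then assert without proof that the finiteness of $\mathrm{Pic}^0_X$ and $H^1(X,W\mathcal{O}_X)$ rules out an infinite tower. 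That assertion \emph{is} the theorem; it does not follow from the stated finiteness claims. A further factual slip: properness alone does not make $H^1(X,W\mathcal{O}_X)$ of finite length (for an ordinary elliptic curve it is $W(k)$); the vanishing of the relevant slope part for rationally chain connected varieties is a deep input of the same calibre as the coprime-to-$p$ refinement in \cite{CL2}, and is only available in the smooth case, not for merely normal $X$.

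The paper sidesteps all of this by never decomposing $\pi^{\rm loc}$ group-theoretically. It geometrizes Chambert-Loir's argument: a chain of rational curves from a rational point $x_0$ to the generic point is spread out (Lemma \ref{lemCL1}) into families $F^i\colon\mathbb{P}^1\times T_i\to X$ whose $0$- and $\infty$-sections dominate successive subvarieties $V_0=\{x_0\}\subset\cdots\subset V_m=X$; the isomorphism $\pi(\mathbb{P}^1\times T_i)\cong\pi(T_i)$ (Proposition \ref{propNEWPROD}) transports finiteness of the image of $\pi^{\rm loc}(V_i)$ in $\pi^{\rm loc}(X)$ up to $V_{i+1}$, using only the finite-index behaviour of $\pi^{\rm loc}$ under dominant maps of normal integral schemes (Lemma \ref{lemCL2}); after $m$ steps one reaches $V_m=X$. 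If you wish to salvage your approach you would need, at a minimum, a uniform bound on local torsors over \emph{all} covers in your tower and a mechanism for non-commutative height-one group schemes; as it stands, the proposal proves the \'etale reduction and leaves the local finiteness as a conjecture.
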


The strategy of the proof is similar to that in \cite{CL}, adapted to the 
new setting.

\section{Preliminaries}\label{sez:PREM}

We will write $\pi(X)$ instead of $\pi(X,x)$ to simplify the notation. 
However all the schemes for which we will compute the fundamental group 
scheme are meant to be pointed and all the morphisms between them take the 
marked point in the domain space to the marked point in the target space. 
The same convention will be applied to torsors: we assume 
they are pointed and morphisms between them take the marked point in the 
domain space to the marked point in the target space.

Let $k$ be an algebraically closed field of any characteristic.
A proper variety $X$ over $k$ is said to be rationally chain connected if for every algebraically
closed field $\Omega$ containing $k$, for any two points in $X(\Omega)$ there is a
proper and connected curve passing through them such that its normalization is a
disjoint union of projective lines. If this union consists of only one projective line
we say that $X$ is rationally connected.

Let $X$ be a rationally chain connected variety over $k$.
We recall \cite[Lemma 1]{CL} and we sketch its proof:

\begin{lemma}\label{lemCL1}
Let $k\,\subseteq\, \Omega$ be a field extension
where $\Omega$ is algebraically closed. Let $$F_{\Omega}\,:\, \mathbb{P}_{\Omega}^1
\,\longrightarrow\, X_{\Omega}$$ be a rational curve of $X_{\Omega}$. Let
$x_0\,:=\,F_{\Omega}(0)$ and $x_{\infty}\,:=\,F_{\Omega}(\infty)$ be points of
$X_{\Omega}$ then let $V_0$ and $V_{\infty}$ be their Zariski closure in $X$.
Then there exist a normal integral $k$-scheme $T$, a morphism
$$F\,:\, \mathbb{P}_{T}^1\,\longrightarrow\, X$$ such that the morphisms defined as
$$F_0(t)\,:=\,F(0,t)\,:\,T\,\longrightarrow\, X\ ~ \text{ and }~\ F_{\infty}(t)\,:=\,
F(\infty,t)\,:\,T\,\longrightarrow\, X$$
are dominant over $V_0$ and $V_{\infty}$ respectively.
\end{lemma}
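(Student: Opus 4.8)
The idea is to realize the rational curve $F_\Omega$ as a specialization of a family of curves defined over $k$, using that $\Omega$ is a filtered union of finitely generated $k$-subalgebras. Concretely, the morphism $F_\Omega\colon\mathbb P^1_\Omega\to X_\Omega$ is determined by finitely many coefficients (in suitable affine charts), all of which lie in some finitely generated $k$-subalgebra $A\subseteq\Omega$; enlarging $A$ if necessary we may assume the two points $x_0,x_\infty\in X(\Omega)$ are already defined over $A$, i.e.\ factor through $X(A)$. Thus $F_\Omega$ descends to a morphism $F_A\colon\mathbb P^1_A\to X$ over $\mathrm{Spec}\,A$, and we set $T_0\,=\,\mathrm{Spec}\,A$. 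Shrinking $\mathrm{Spec}\,A$ to a nonempty open subset and then normalizing, we obtain a normal integral $k$-scheme $T$ together with $F\colon\mathbb P^1_T\to X$ restricting, over the image of the generic point, to (a model of) $F_\Omega$.

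It remains to check the dominance statement for $F_0$ and $F_\infty$. By construction $F_0\colon T\to X$ sends the generic point of $T$ to $x_0\in X_\Omega$, whose image in $X$ (under $X_\Omega\to X$) is the generic point of $V_0$ by definition of $V_0$ as the Zariski closure of $x_0$. Since $F_0$ is a morphism of $k$-schemes with $T$ integral, its image is an irreducible subset of $X$ whose closure is $\overline{F_0(\eta_T)}\,=\,\overline{\{\eta_{V_0}\}}\,=\,V_0$; hence $F_0$ is dominant onto $V_0$. The same argument applies verbatim to $F_\infty$ and $V_\infty$. (If one wants $F_0$ and $F_\infty$ to be genuinely surjective onto $V_0$ and $V_\infty$ rather than merely dominant, one can further shrink $T$ using Chevalley's theorem on constructible images, but dominance is all that is claimed.)

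The one point requiring a little care — and the place where I expect the argument to be least routine — is the spreading-out step: one must ensure that the \emph{same} finitely generated subalgebra $A$ simultaneously carries the data of $F_\Omega$, of the two sections $0,\infty\colon \mathrm{Spec}\,\Omega\to\mathbb P^1$ (these are defined over $k$ already), and of the two points $x_0,x_\infty$, and that after spreading out the resulting $F_A$ is still a morphism \emph{into $X$} and not merely into $X_A$ — but this is automatic since $X$ is a $k$-scheme and $X_A\to X$ is the base-change projection, so composing gives the desired $F$. Normality of $T$ is arranged for free by replacing $\mathrm{Spec}\,A$ with its normalization, which is finite over a nonempty open of $\mathrm{Spec}\,A$ since $A$ is a finitely generated $k$-algebra and hence excellent; integrality holds because $A\subseteq\Omega$ is a domain.
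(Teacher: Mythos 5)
Your proposal is correct and follows essentially the same route as the paper: spread out $F_{\Omega}$ over a finitely generated $k$-subalgebra $A\subseteq\Omega$, take $T$ to be (a normalization of) $\mathrm{Spec}\,A$, compose with the projection $X_A\to X$, and observe that the generic point of $T$ maps under $F_0$ to the generic point of $V_0$, so that the closure of the (irreducible) image is exactly $V_0$. The only cosmetic difference is in how the closure-of-image computation is phrased (the paper argues via $F_0^{-1}(V_0)$ being closed and dense), which changes nothing of substance.
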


\begin{proof}
There exists a finitely generated $k$-algebra $k\,\subseteq\, A$ contained
in $\Omega$, and there is a morphism $$F_{A}\,:\,
\mathbb{P}_{A}^1\,\longrightarrow\, X_{A}$$ such that $F_{\Omega}\,=\,F_A
\otimes_A
\Omega$. We set $T\,:=\,{\rm Spec}(A)$ that we assume to be normal (otherwise we replace it with a finite extension). We now consider the morphism
$F\,:\, \mathbb{P}_{T}^1\,\longrightarrow\, X$ obtained from $F_{A}$ after composing with the projection $X_A\,\longrightarrow\, X$. In \cite[Lemma 1]{CL} it has been proved that $F_0$ and $F_{\infty}$ are
dominant over $V_0$ and $V_{\infty}$; we briefly recall this last part for the
convenience of the reader: We study $F_{0}$ (it will
be the same for $F_{\infty}$). The
image by $F_{0}$ of the generic point of ${\rm Spec}(A)$ is the generic
point of $V_0$. Since $V_0$ is closed in $X$, the inverse image
$F_{0}^{-1}(V_0)$ is closed in ${\rm Spec}(A)$ and dense. Thus
$F_{0}^{-1}(V_0)$ coincides with
${\rm Spec}(A)$, and hence the image of $F_{0}$ is contained in $V_0$ and contains
its generic point. Therefore it contains an open dense subset of $V_0$. 
\end{proof}

\section{The main theorem}

The following lemma is well-known. We include
a short proof of it for the convenience of the reader.

\begin{lemma}\label{lemGROUPS}
Let $G$ be a finite $k$-group scheme and let $G^{\text{\'et}}$ and $G^{\rm loc}$ be
respectively the maximal \'etale quotient and the maximal connected quotient. Then the
natural morphism $$\alpha\,:\,G\,\longrightarrow\, G^{\text{\'et}}\times G^{\rm loc}$$
is faithfully flat. 
\end{lemma}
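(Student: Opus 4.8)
The plan is to reduce the statement to one identity of subgroup schemes, after which everything is a count of orders. First recall the connected--étale exact sequence of $G$: the identity component $G^0$ is a normal subgroup scheme, it is the kernel of the projection $\pi_{\text{\'et}}\colon G\to G^{\text{\'et}}$, and $G/G^0=G^{\text{\'et}}$. Write $\pi_{\rm loc}\colon G\to G^{\rm loc}$ for the projection onto the maximal connected quotient and set $N:=\ker\pi_{\rm loc}$, a normal subgroup scheme; then $\alpha=(\pi_{\text{\'et}},\pi_{\rm loc})$ and $\ker\alpha=G^0\cap N$. I would use that a homomorphism of finite $k$-group schemes is faithfully flat precisely when its scheme-theoretic image is the whole target, which (since $|G|=|\ker\alpha|\cdot|\alpha(G)|$) amounts to $|G|=|\ker\alpha|\cdot|G^{\text{\'et}}|\cdot|G^{\rm loc}|$. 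Substituting $|G^{\text{\'et}}|=|G|/|G^0|$ and $|G^{\rm loc}|=|G|/|N|$ and invoking the order formula $|G^0\cdot N|=|G^0|\,|N|/|G^0\cap N|$, a short manipulation shows that $\alpha$ is faithfully flat if and only if $G^0\cdot N=G$.

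So the whole lemma comes down to proving $G^0\cdot N=G$, and this is the one step with actual content. I would look at the quotient $G/(G^0\cdot N)$: since $G^0\cdot N\supseteq N$ it is a quotient of $G/N=G^{\rm loc}$, hence connected, while since $G^0\cdot N\supseteq G^0$ it is a quotient of $G/G^0=G^{\text{\'et}}$, hence étale; a finite group scheme that is both connected and étale is trivial, so $G/(G^0\cdot N)=1$, i.e. $G^0\cdot N=G$. (This argument uses only that $G^{\rm loc}$ is \emph{a} connected quotient of $G$, not that it is the maximal one, and works in any characteristic.)

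Running the equivalence of the first step backwards then finishes it: $G^0\cdot N=G$ forces $|G^0\cap N|=|G^0|\,|N|/|G|$, so $|\alpha(G)|=|G|/|G^0\cap N|=|G^{\text{\'et}}|\cdot|G^{\rm loc}|$, and a closed subgroup scheme of $G^{\text{\'et}}\times G^{\rm loc}$ whose coordinate ring has the full $k$-dimension must be all of it. I do not expect a genuine obstacle here; the only thing to be careful about is the routine bookkeeping of the first paragraph — that $G^0\cdot N$ is a closed subgroup scheme (where normality of $G^0$ enters), the order formula $|HK|=|H|\,|K|/|H\cap K|$ for finite group schemes, and the description of $G/(G^0\cdot N)$ as a common quotient of $G^{\text{\'et}}$ and of $G^{\rm loc}$ through the correspondence between subgroup schemes of a quotient and subgroup schemes of $G$ that contain the kernel.
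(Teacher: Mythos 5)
Your argument is correct, but it establishes the key point by a different structural input than the paper does. The paper first reduces to the case where $\alpha$ is a closed immersion and then invokes the splitting of the connected--\'etale sequence over a perfect field (via \cite[\S~6, Ex.~9]{WW}) to realise $G^{\text{\'et}}$ as a subgroup scheme of $G_{\rm red}\subseteq N^{\rm loc}=\ker(G\to G^{\rm loc})$; the resulting inequality $\vert G^{\text{\'et}}\vert\vert G^{\rm loc}\vert\le\vert G\vert$ then forces the closed immersion to be an isomorphism. You avoid the splitting entirely: your only structural fact is that a finite group scheme which is simultaneously a quotient of the connected group $G^{\rm loc}$ and of the \'etale group $G^{\text{\'et}}$ must be trivial, which yields $G^0\cdot N=G$, after which the order bookkeeping computes $\vert\operatorname{im}\alpha\vert=\vert G^{\text{\'et}}\vert\vert G^{\rm loc}\vert$ exactly. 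Your route buys independence from perfectness of $k$ (the splitting the paper cites genuinely needs $k$ perfect, though that is harmless here since $k$ is algebraically closed) and, as you note, uses only that $G^{\rm loc}$ is \emph{some} connected quotient; the paper's route buys a single citable input from Waterhouse in place of the second isomorphism theorem and the order formula $\vert HK\vert=\vert H\vert\vert K\vert/\vert H\cap K\vert$ for finite group schemes, which is exactly the bookkeeping you correctly flag as needing care ($G^0\cdot N$ is a closed normal subgroup scheme because both factors are normal, and the formula follows from $HK/K\cong H/(H\cap K)$ together with multiplicativity of orders in exact sequences). Both proofs are at heart the same order count; yours is self-contained and slightly more general.
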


\begin{proof}
The field being perfect the reduced subscheme $G_{\text{red}}$ is a subgroup
scheme of $$N^{\text{loc}}\,:=\,\text{kernel} (G\to G^{\text{loc}})\, ,$$ while the
connected component $G^0$ of $G$ is $\text{kernel}(G\to 
G^{\text{\'et}})$. If $\alpha$ is not faithfully flat we can factor it as in
the following diagram:
$$\xymatrix{ & G\ar@{->>}[ldd]\ar@{->>}[d]^{q}\ar@{->>}[rdd] & \\ &
G'\ar@{->>}[ld]\ar@{^{(}->}[d]^{j}\ar@{->>}[rd] & \\G^{\text{\'et}} &
G^{\text{\'et}}\times G^{\text{loc}}\ar@{->>}[l]\ar@{->>}[r] & G^{\text{loc}} }
$$
where $q\,:\,G\,\twoheadrightarrow\, G'$ is faithfully flat and $j\,:\,G'\,
\hookrightarrow\, G^{\text{\'et}}\times G^{\text{loc}}$ is a closed immersion. Clearly
$G^{\text{\'et}}$ and $G^{\text{loc}}$ are still the maximal \'etale
quotient and the maximal connected quotient of $G'$ respectively. So we can
assume $\alpha$ is a closed immersion. Therefore, the lemma is equivalent to the
assertion that $\alpha$ is an isomorphism.

{}From \cite[\S~6, Ex. 9]{WW} it follows that $G^{\text{\'et}}$
is isomorphic to a subgroup-scheme of $G$ which we identify with
$G^{\text{\'et}}$, so in particular $G^{\text{\'et}}\,\leq\, 
G_{\text{red}}\,\leq\, N^{\text{loc}}$. Therefore, we have 
$$\vert G^{\text{\'et}}\vert\vert G^{\text{loc}}\vert\,=\,
\vert G^{\text{\'et}}\vert\frac{\vert G\vert }{\vert N^{\text{loc}}\vert
}\,\leq\,\vert G^{\text{\'et}}\vert\frac{\vert G\vert }{\vert G^{\text{\'et}}
\vert } \,=\, \vert G\vert $$
which implies that $\alpha$ is an isomorphism.
\end{proof}

We recall that when $X$ is a reduced and connected scheme over a field $k$
then the fundamental group scheme can be defined. In this case a finite $G$-torsor $Y\,\longrightarrow\, X$ is called Nori-reduced if the canonical morphism $\pi(X)
\,\longrightarrow\, G$ is faithfully flat. 

\begin{lemma}\label{lemTORS}
Let $X$ be a connected and reduced scheme over $k$. Let $G$ (respectively,
$H$) be a finite local (respectively, finite \'etale) $k$--group
scheme. Let $$Y\,\longrightarrow\, X$$ and $T\,\longrightarrow\, X$ be a $G$--torsor and
an $H$--torsor respectively. We assume that both $Y$ and $T$ are Nori-reduced. Then
the $H\times G$--torsor $$T\times_X Y\,\longrightarrow\, X$$ is also Nori-reduced. 
\end{lemma}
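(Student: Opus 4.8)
The plan is to restate everything in terms of finite quotients of $\pi(X)$. The $H\times G$--torsor $W\,:=\,T\times_X Y\,\longrightarrow\, X$ is classified by a morphism $\psi\,=\,(\psi_1,\psi_2)\,:\,\pi(X)\,\longrightarrow\, H\times G$ whose components $\psi_1\,:\,\pi(X)\,\longrightarrow\, H$ and $\psi_2\,:\,\pi(X)\,\longrightarrow\, G$ classify $T$ and $Y$ respectively. Write $\mathcal{G}\,\subseteq\, H\times G$ for the scheme-theoretic image of $\psi$; this is a closed subgroup scheme, and $\psi$ factors as $\pi(X)\twoheadrightarrow\mathcal{G}\hookrightarrow H\times G$, so $W$ is Nori-reduced exactly when $\mathcal{G}\,=\,H\times G$. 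Since $T$ and $Y$ are Nori-reduced, $\psi_1\,=\,\mathrm{pr}_H\circ\psi$ and $\psi_2\,=\,\mathrm{pr}_G\circ\psi$ are faithfully flat, and as they factor through $\mathcal{G}$ the restricted projections $\mathcal{G}\,\longrightarrow\, H$ and $\mathcal{G}\,\longrightarrow\, G$ are faithfully flat, i.e. surjective. Thus everything reduces to the purely group-scheme-theoretic assertion: \emph{a closed subgroup scheme $\mathcal{G}$ of $H\times G$ surjecting onto both factors, with $H$ \'etale and $G$ local, equals $H\times G$.}

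To prove this I would use the connected-\'etale structure. Over the (perfect) field $k$, since $H$ is \'etale and $G$ local, the connected component of $H\times G$ is $\{e\}\times G$, giving $1\,\to\, G\,\to\, H\times G\,\to\, H\,\to\, 1$. Hence the connected component $\mathcal{G}^0$ of $\mathcal{G}$ is $\mathcal{G}\cap G$: a closed subgroup scheme of the local group $G$ is local, hence connected, so $\mathcal{G}\cap G\subseteq\mathcal{G}^0$, while $\mathcal{G}/(\mathcal{G}\cap G)\hookrightarrow H$ is \'etale, forcing the reverse inclusion. Put $M\,:=\,\mathcal{G}\cap H\,=\,\ker(\mathcal{G}\to G)$, which is \'etale, being a subgroup scheme of $H$. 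Both $\mathcal{G}^0$ and $M$ are normal in $\mathcal{G}$ and $\mathcal{G}^0\cap M\subseteq G\cap H\,=\,\{e\}$, so $\mathcal{G}^0 M$ is a normal subgroup scheme of $\mathcal{G}$ of order $|\mathcal{G}^0|\,|M|$; moreover $\mathcal{G}/\mathcal{G}^0 M$ is at once a quotient of the \'etale group $\mathcal{G}/\mathcal{G}^0$ and of the local group $\mathcal{G}/M\cong G$, hence trivial. Therefore $|\mathcal{G}|\,=\,|\mathcal{G}^0|\,|M|$.

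Finally I would compare orders. Surjectivity of $\mathcal{G}\to G$ with kernel $M$ gives $|\mathcal{G}|\,=\,|M|\,|G|$, so $|\mathcal{G}^0|\,=\,|G|$ and hence $\mathcal{G}^0\,=\,G$ (finite, one contained in the other); then surjectivity of $\mathcal{G}\to H$ with kernel $\mathcal{G}\cap G\,=\,\mathcal{G}^0\,=\,G$ gives $|\mathcal{G}|\,=\,|G|\,|H|\,=\,|H\times G|$, whence $\mathcal{G}\,=\,H\times G$. The only genuinely delicate points are the connected-\'etale bookkeeping — identifying $\mathcal{G}^0$ with $\mathcal{G}\cap G$, and using that a group scheme which is simultaneously \'etale and local over $k$ is trivial — together with the standard fact that the image of $\psi$ is a closed subgroup scheme through which the classifying morphisms factor; these are where care is needed rather than difficulty. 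A cosmetic variant would first quotient $H\times G$ by $M\times(\mathcal{G}\cap G)$ to reduce to the case where $\mathcal{G}$ meets neither factor, in which case $\mathcal{G}\to H$ and $\mathcal{G}\to G$ are isomorphisms, forcing the common quotient to be both \'etale and local, hence trivial; the order computation above is just that argument made explicit.
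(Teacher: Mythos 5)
Your proof is correct and follows essentially the same route as the paper: both reduce the statement to the purely group--scheme--theoretic fact that a closed subgroup scheme of $H\times G$ surjecting onto the \'etale factor $H$ and the local factor $G$ must equal $H\times G$, and both establish this via the connected--\'etale decomposition together with an order count. The only difference is organizational: the paper phrases the reduction in terms of a torsor $Z$ with structure group the image subgroup scheme $M$ and then invokes Lemma~\ref{lemGROUPS} for the order count, whereas you work directly with the classifying homomorphism $\pi(X)\longrightarrow H\times G$ and inline the group-theoretic argument.
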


\begin{proof}
If $\pi(X)\,\longrightarrow\, H\times G$ is not faithfully flat then there
exists a triple $(M\, , Z\, ,\iota)$, where
\begin{itemize}
\item $M\,\hookrightarrow\, H\times G$ is a subgroup-scheme,

\item $Z\,\longrightarrow\, X$ is a $M$--torsor, and

\item $\iota\, :\, Z\,\hookrightarrow\, T\times_X Y$ is a reduction of structure
group-scheme, to $M$, of the $H\times G$--torsor $T\times_X Y$.
\end{itemize}
Let
$$T'\,\longrightarrow\, X~ \ \text{ and }~\ Y'\,\longrightarrow\, X$$ be the
$M^{\text{\'et}}$ and $M^{\text{loc}}$--torsors respectively,
obtained from the $M$--torsor $Z\,\longrightarrow\, X$ using the projections
of $M$ to $M^{\text{\'et}}$ and $M^{\text{loc}}$ respectively (the notation
is as in Lemma \ref{lemGROUPS}).
We have a closed immersion $$Z\,\hookrightarrow\, T'\times_X Y'$$ induced by
the closed immersion $M\,\hookrightarrow \,M^{\text{\'et}}\times M^{\text{loc}}$.
The latter is an isomorphism by Lemma \ref{lemGROUPS}, so the same is true for
$Z\,\hookrightarrow\, T'\times_X Y'$.

The projection $M\,\twoheadrightarrow\, H$ (respectively, $M\,\twoheadrightarrow
\, G$) clearly factors through $M^{\text{\'et}}$ (respectively, $M^{\text{loc}}$).
Note that the projections $$M\,\longrightarrow\, H \ \ 
\text{ and } \ \ M\,\longrightarrow \,G$$
are faithfully flat morphisms because the two torsors $Y$ and $T$ are
Nori-reduced. Consequently, the two homomorphisms $M^{\text{\'et}}\,\longrightarrow\,
H$ and $M^{\rm loc}\,\longrightarrow\, G$ are isomorphisms. Now using
Lemma \ref{lemGROUPS} it follows that the inclusion $M\,\hookrightarrow\, H\times G$
is an isomorphism. Consequently, the $H\times G$--torsor $T\times_X Y\,\longrightarrow\,
X$ is Nori-reduced.
\end{proof}

The following result was proved in \cite[Proposition 3.6]{EPS} under the assumption
that $X$ is proper.

\begin{corollary}\label{corTORS}
Let $X$ be a connected and reduced scheme over $k$. Let $G$ (respectively, $H$)
be a finite local (respectively, finite \'etale) $k$-group
scheme. Let $Y\,\longrightarrow\, X$ be a $G$--torsor and
$T\,\longrightarrow\, X$ an $H$--torsor. We assume that both the torsors are
Nori-reduced. Then the $G$--torsor
$$T\times_X Y \,\longrightarrow\,T$$ is also Nori-reduced. In particular, the morphism
$\pi^{\rm loc}(T)\,\longrightarrow\, \pi^{\rm loc}(X)$ is faithfully flat.
\end{corollary}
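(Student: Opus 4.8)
The plan is to read the statement off from Lemma~\ref{lemTORS} together with the exact sequence of fundamental group schemes attached to the connected finite \'etale torsor $f\colon T\to X$. First note that $\pi(T)$ makes sense: $T$ is reduced, being \'etale over the reduced scheme $X$, and it is connected, being a Nori-reduced (pointed) torsor. Let $\phi_T\colon\pi(X)\to H$ and $\phi_Y\colon\pi(X)\to G$ classify $T\to X$ and $Y\to X$. The $G$-torsor $T\times_X Y\to T$ is the pull-back $f^{*}Y$, hence it is classified by the composite $\phi_Y\circ f_{*}\colon\pi(T)\to\pi(X)\to G$, and the assertion is precisely that this composite is faithfully flat.

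By Lemma~\ref{lemTORS} the $(H\times G)$-torsor $T\times_X Y\to X$ is Nori-reduced, i.e.\ $\gamma:=(\phi_T,\phi_Y)\colon\pi(X)\to H\times G$ is faithfully flat. Let $N\subseteq\pi(X)$ be the scheme-theoretic image of $f_{*}$. On the one hand $f^{*}(T\to X)=T\times_X T\to T$ is the trivial $H$-torsor, so $\phi_T\circ f_{*}$ is trivial and $N\subseteq\ker\phi_T$; on the other hand, for the connected finite \'etale torsor $f$ one has the exact sequence $1\to\pi(T)\to\pi(X)\xrightarrow{\phi_T}H\to1$, so in fact $N=\ker\phi_T$. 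Since $\gamma$ is faithfully flat, $\gamma(\ker\phi_T)=\ker(H\times G\to H)=\{e\}\times G$, whence $\phi_Y(N)=\mathrm{pr}_G\bigl(\gamma(N)\bigr)=G$. Therefore $\phi_Y\circ f_{*}$ is faithfully flat and $T\times_X Y\to T$ is Nori-reduced.

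For the last assertion, apply the above to an arbitrary Nori-reduced $G$-torsor $Y\to X$ with $G$ finite and local. Since $G$ is local, $\phi_Y\circ f_{*}$ factors through the natural homomorphism $\pi^{\mathrm{loc}}(T)\to\pi^{\mathrm{loc}}(X)$ followed by $\pi^{\mathrm{loc}}(X)\to G$, and what has just been proved is that this composite is faithfully flat. Writing $\pi^{\mathrm{loc}}(X)=\varprojlim G$ over these local quotients, its coordinate ring is the directed union of the rings $\mathcal{O}(G)$, and each of the latter embeds into $\mathcal{O}(\pi^{\mathrm{loc}}(T))$; hence $\mathcal{O}(\pi^{\mathrm{loc}}(X))\to\mathcal{O}(\pi^{\mathrm{loc}}(T))$ is injective and $\pi^{\mathrm{loc}}(T)\to\pi^{\mathrm{loc}}(X)$ is faithfully flat.

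The one point that needs genuine work is the exact sequence $1\to\pi(T)\to\pi(X)\to H\to1$ for the connected \'etale torsor $f$, and in fact only the inclusion $\ker\phi_T\subseteq N$ is at stake, the reverse being immediate. I would derive it from the push-forward functor along the finite \'etale morphism $f$: the projection formula gives $f_{*}f^{*}(-)\cong(-)\otimes f_{*}\mathcal{O}_T$ with $f_{*}\mathcal{O}_T$ the regular representation of $H$, and Frobenius reciprocity then identifies $\mathrm{Hom}_{\pi(T)}(f^{*}V,f^{*}W)$ with $\mathrm{Hom}_{\ker\phi_T}(V,W)$; this makes the factorisation $\mathrm{Rep}(\ker\phi_T)\to\mathrm{Rep}(\pi(T))$ of $f^{*}$ fully faithful, hence $\pi(T)\to\ker\phi_T$ faithfully flat. (Equivalently one may invoke the non-abelian inflation--restriction sequence for the torsor $T\to X$.) A purely torsor-theoretic route mimicking the proof of Lemma~\ref{lemTORS} --- starting from a hypothetical reduction $Z\hookrightarrow T\times_X Y$ of the $G$-torsor $T\times_X Y\to T$ to a proper $G'\subsetneq G$ and trying to build a reduction of the $(H\times G)$-torsor $T\times_X Y\to X$ to $H\times G'$ --- runs into the obstruction that $Z$ need not be stable under the $H$-action on $T\times_X Y$, which is exactly what the exact sequence circumvents.
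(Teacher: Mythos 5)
Your argument is correct in outline but takes a genuinely different route from the paper's. The paper follows exactly the ``purely torsor-theoretic route'' you set aside at the end: given a reduction $i\colon U\hookrightarrow T\times_X Y$ of the $G$--torsor $T\times_X Y\to T$ to a subgroup $G_1\subseteq G$, it describes the image $i(U(S))$ explicitly (choosing lifts $u_x$ over points $x\in X(S)$ rather than over points of $T(S)$) as the sweep of $(t_x,y_x)$ under $H\times G_1$, thereby endowing $U\to X$ with the structure of an $H\times G_1$--torsor inside the $H\times G$--torsor $T\times_X Y\to X$, and then concludes $G_1=G$ from Lemma~\ref{lemTORS}. So your parenthetical worry about the $H$--stability of the reduction is aimed precisely at the step the paper asserts; whether the paper's justification of that stability is fully airtight is a fair question, but that is the argument it gives, and it uses nothing beyond Lemma~\ref{lemTORS}. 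Your alternative --- Lemma~\ref{lemTORS} plus the homotopy exact sequence $\pi(T)\to\pi(X)\xrightarrow{\phi_T}H\to 1$ for the Nori-reduced \'etale torsor $T\to X$ --- is the route of Esnault--Hai--Sun in the proper case, and granting that sequence, your deductions $\gamma(\ker\phi_T)=\{e\}\times G$, $\phi_Y(\mathrm{im}\,f_*)=G$, and the passage to the pro-limit defining $\pi^{\rm loc}$ are all correct.

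The caveat is that the exact sequence is a substantially heavier input than anything the paper invokes, and your sketch of it does not cover the generality in which the corollary is stated and later used. You derive $\mathrm{im}(f_*)\supseteq\ker\phi_T$ from the projection formula and Frobenius reciprocity for essentially finite vector bundles; this presupposes the Tannakian description of $\pi(X)$, hence properness of $X$, whereas here $X$ is only assumed connected and reduced, and Lemma~\ref{lemCL2} applies the corollary (via Corollary~\ref{corTORS2}) to open, non-proper subschemes $V'$ and $W'$. Moreover, full faithfulness of $\mathrm{Rep}(\ker\phi_T)\to\mathrm{Rep}(\pi(T))$ alone does not characterize faithful flatness of $\pi(T)\to\ker\phi_T$ --- one also needs the essential image to be closed under subobjects --- though for the finite quotients actually required this can be repaired. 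To make your route self-contained at the level of rigor of the paper you would need the equality $\mathrm{im}(f_*)=\ker\phi_T$ for Nori-reduced torsors over an arbitrary connected reduced pointed base; that is exactly the statement the paper's direct $S$-point computation is designed to bypass.
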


\begin{proof}
Let us assume that there is a finite local $k$-group
scheme $G_1\,\subset\,G$, and $G_1$--torsor $U\,\longrightarrow\, T$ and a
reduction
$$i\,:\,U\,\hookrightarrow\, T\times_X Y$$ of structure group to $G_1$.
Let $S$ be any $k$-scheme. For any $x\,\in\, X(S)$ we choose $u_x\,\in\, U(S)$ whose
image in $X(S)$ is $x$. We set $(t_x\, ,y_x)\,:=\,i(u_x)$, then $$T(S)\times_{X(S)}Y(S)
\,=\,\{(ht_x,gy_x)\, ,~\ \forall ~x\,\in\, X(S)\, ,~\ \forall ~ g\,\in\, G(S)\, ,
~\ \forall ~ h\,\in\, H(S) \}$$ so the image
of $U(S)$ by $i_S$ can be identified with the subset 
$$\{(ht_x\, ,gy_x)\, ,~\ \forall ~x\,\in\, X\, ,~\ \forall ~g\,\in\,
G_1\, ,~\ \forall ~h\,\in\, H \}\, ;$$ this gives $U$ the structure of an $H\times
G_1$--torsor over $X$, contained in the $H\times G$--torsor $T\times_X Y$. This
implies that $G_1\,=\,G$ by Lemma \ref{lemTORS}.
\end{proof}

\begin{corollary}\label{corTORS2}
Let $X$ be a connected reduced scheme over $k$ and $G$ a finite local $k$-group
scheme. Let $T\,\longrightarrow\, X$ be a finite \'etale cover, and let $Y
\,\longrightarrow\, X$ be a $G$--torsor. If
$Y\,\longrightarrow\, X$ is Nori-reduced and $T$ is connected, then the $G$--torsor
$$T\times_X Y\,\longrightarrow\, T$$ is also Nori-reduced. In particular the homomorphism
$\pi^{\rm loc}(T)\,\longrightarrow\, \pi^{\rm loc}(X)$ is faithfully flat.
\end{corollary}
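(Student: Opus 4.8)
The plan is to reduce the assertion to Corollary \ref{corTORS} by replacing the finite \'etale cover $T\,\longrightarrow\, X$ by its Galois closure. First I would pick a connected finite \'etale Galois cover $q\,:\,T'\,\longrightarrow\, X$ that factors through $T$, say $T'\,\longrightarrow\, T\,\longrightarrow\, X$ with all the morphisms respecting the marked points; concretely $T'$ is the Galois closure of $T\,\longrightarrow\, X$ taken through the marked point of $T$. Let $H$ be the finite \'etale $k$-group scheme for which $q$ is an $H$-torsor. Since $T'$ is connected, any reduction of structure group of $T'\,\longrightarrow\, X$ to a subgroup scheme $H'\,\subseteq\, H$ is a nonempty open and closed subscheme of $T'$ (it is finite \'etale over $X$, like $T'$ itself), hence equals $T'$, forcing $H'\,=\,H$; thus $T'\,\longrightarrow\, X$ admits no proper reduction, which means that the classifying homomorphism $\pi(X)\,\longrightarrow\, H$ is faithfully flat and the $H$-torsor $T'\,\longrightarrow\, X$ is Nori-reduced.

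Next I would invoke Corollary \ref{corTORS} for the Nori-reduced $H$-torsor $T'\,\longrightarrow\, X$ and the Nori-reduced $G$-torsor $Y\,\longrightarrow\, X$: it gives that the $G$-torsor $T'\times_X Y\,\longrightarrow\, T'$ is Nori-reduced. Writing $\rho\,:\,\pi(X)\,\longrightarrow\, G$ for the faithfully flat homomorphism classifying $Y\,\longrightarrow\, X$, this says that the composite $\pi(T')\,\longrightarrow\,\pi(X)\,\longrightarrow\, G$ (the second arrow being $\rho$) is faithfully flat.

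Finally, I would descend the result from $T'$ back to $T$. From $T'\times_X Y\,=\,T'\times_T(T\times_X Y)$ one sees that the $G$-torsor $T'\times_X Y\,\longrightarrow\, T'$ is the pullback along $T'\,\longrightarrow\, T$ of the $G$-torsor $T\times_X Y\,\longrightarrow\, T$; hence the homomorphism $\pi(T')\,\longrightarrow\, G$ just considered factors as $\pi(T')\,\longrightarrow\,\pi(T)\,\longrightarrow\, G$, where the second arrow is the homomorphism classifying $T\times_X Y\,\longrightarrow\, T$. As the composite is faithfully flat, so is $\pi(T)\,\longrightarrow\, G$; that is, $T\times_X Y\,\longrightarrow\, T$ is Nori-reduced. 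For the last statement, $G$ being local, the map $\pi(T)\,\longrightarrow\, G$ factors through $\pi^{\rm loc}(T)$; applying the above to the Nori-reduced torsor attached to each finite local quotient $\pi^{\rm loc}(X)\,\twoheadrightarrow\, G$, and then passing to the inverse limit over all such $G$, yields that $\pi^{\rm loc}(T)\,\longrightarrow\,\pi^{\rm loc}(X)$ is faithfully flat.

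The step I expect to require the most care is the first one: producing the Galois closure through the marked point and checking that a connected finite \'etale Galois cover of a connected reduced scheme is Nori-reduced. These are part of the standard Galois theory underlying the maximal pro-\'etale quotient of $\pi(X)$ (connected covers correspond to open subgroups and Galois covers to open normal subgroups). Once this is in place, the rest is formal bookkeeping with the induced homomorphisms of fundamental group schemes, so I do not anticipate further difficulties.
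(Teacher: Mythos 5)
Your proof is correct and follows essentially the same route as the paper: the paper's (very terse) proof likewise passes to a Nori-reduced finite \'etale torsor $T'\longrightarrow X$ dominating $T\longrightarrow X$ and invokes Corollary \ref{corTORS}. You have simply supplied the details the paper leaves implicit (Nori-reducedness of the connected Galois closure and the descent from $T'$ to $T$ via the factorization $\pi(T')\longrightarrow\pi(T)\longrightarrow G$), all of which are sound.
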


\begin{proof} This follows from Corollary \ref{corTORS} and the fact that there exist a
finite \'etale $k$--group scheme $H'$ and an $H'$--torsor $T'\,\longrightarrow\, X$
that dominates $T\,\longrightarrow\, X$.
\end{proof}

\begin{remark}\label{remNORI}
In \cite{N2}, Nori proved that if
$i\,:\,U\,\longrightarrow\, Y$ is an open immersion between connected and
reduced schemes with $Y$ normal, then the morphism $\pi(U)\,\longrightarrow\,
\pi(Y)$ induced by $i$ is faithfully flat (see \S~II, Proposition 6 and
its corollaries). Consequently, the homomorphism
$\pi^{\rm loc}(U)\,\longrightarrow\, \pi^{\rm loc}(Y)$ induced by $i$ is
also faithfully flat. 
\end{remark}

\begin{notation}\label{notFINDEX}
Let $k$ be a field and $u\,:\,M\,\longrightarrow\, G$ a $k$-group scheme
homomorphism. We say that $u$ is of finite index if the following
property is satisfied: for any $k$-group scheme $Q$ and any faithfully
flat morphism of $k$-group schemes $G\,\longrightarrow\, Q$, if the group
scheme image of $M\to Q$ is finite then $Q$ is also finite.
\end{notation}

\begin{lemma}\label{lemCLinsep}
Let $f\,:\,X\,\longrightarrow\, Y$ be a finite purely inseparable morphism between normal
integral schemes. Then the homomorphism $$\pi(X)\,\longrightarrow\,
\pi(Y)$$ is of finite index, while $\pi^{\rm \acute{e}t}(X)\,\longrightarrow\,
\pi^{\rm \acute{e}t}(Y)$ is faithfully flat. So in particular $\pi^{\rm loc}(X)\,\longrightarrow\,
\pi^{\rm loc}(Y)$ is of finite index. 
\end{lemma}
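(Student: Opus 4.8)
The plan is to treat the three assertions separately. The statement about $\pi^{\rm \acute{e}t}$ is the easiest: a finite purely inseparable morphism is finite, surjective and radicial, hence a universal homeomorphism, so by the topological invariance of the \'etale site $f$ induces an equivalence between the categories of finite \'etale covers of $X$ and of $Y$, compatibly with the fibre functors at $x$ and $f(x)$; thus $\pi^{\rm \acute{e}t}(X)\to\pi^{\rm \acute{e}t}(Y)$ is in fact an isomorphism, in particular faithfully flat. The statement about $\pi^{\rm loc}$ will then be a formal consequence of the one about $\pi$: given a faithfully flat $\pi^{\rm loc}(Y)\to Q$, precompose it with $\pi(Y)\twoheadrightarrow\pi^{\rm loc}(Y)$ to obtain a faithfully flat $\pi(Y)\to Q$; by naturality of $\pi\to\pi^{\rm loc}$ along $f$ the image of $\pi(X)$ in $Q$ equals the image of $\pi^{\rm loc}(X)$ in $Q$, so if the latter is finite then $Q$ is finite by Notation \ref{notFINDEX} applied to $\pi(X)\to\pi(Y)$. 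So everything reduces to showing that $\pi(X)\to\pi(Y)$ is of finite index.

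For this I would first record the behaviour of $\pi$ under Frobenius. For $Z$ a reduced connected \emph{proper} $k$-scheme and $r\ge 1$, the $r$-fold relative Frobenius $F^{r}_{Z/k}\colon Z\to Z^{(p^{r})}$ induces on $\pi$ exactly the $r$-fold relative Frobenius $F^{r}_{\pi(Z)}\colon\pi(Z)\to\pi(Z)^{(p^{r})}=\pi(Z^{(p^{r})})$ of the group scheme $\pi(Z)$: this follows by applying the relative Frobenius to Nori's universal torsor $\widehat Z\to Z$, the morphism $\widehat Z\to\widehat Z^{(p^{r})}$ being $F^{r}_{\pi(Z)}$-equivariant over $F^{r}_{Z/k}$, which identifies the pullback $(F^{r}_{Z/k})^{*}\widehat Z^{(p^{r})}$ with the torsor obtained from $\widehat Z$ by extension of structure group along $F^{r}_{\pi(Z)}$. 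The crucial point is that $F^{r}_{\pi(Z)}$ has \emph{finite} cokernel: on each finite quotient $\pi(Z)\twoheadrightarrow G$ the cokernel of $F^{r}$ has order $|\ker(F^{r}_{G})|$, and these orders are bounded independently of $G$ because $Z$ is proper --- equivalently, the largest infinitesimal subgroup scheme of height $\le r$ of $\pi(Z)$ is finite when $Z$ is proper, which can be extracted from the known structure of the local fundamental group scheme of a proper variety (the bound is in terms of $\dim_{k}H^{1}(Z,\mathcal O_{Z})$). A homomorphism with finite cokernel is of finite index, directly from Notation \ref{notFINDEX}; hence $F^{r}_{\pi(Z)}$ is of finite index.

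Now I would reduce the lemma to this. Since $k(X)/k(Y)$ is a finite purely inseparable extension, pick $m$ with $k(X)^{p^{m}}\subseteq k(Y)$. Because $X$ (hence its Frobenius twist $X^{(p^{m})}$) and $Y$ are normal, the inclusions $k(Y)^{p^{m}}\subseteq k(X)^{p^{m}}\subseteq k(Y)$ yield a finite purely inseparable morphism $g\colon Y\to X^{(p^{m})}$ exhibiting $Y$ as dominating the normalization $X^{(p^{m})}$ of $Y^{(p^{m})}$ in $k(X)^{(p^{m})}$, and satisfying $f^{(p^{m})}\circ g=F^{m}_{Y/k}$. Applying $\pi$ gives $\pi(f^{(p^{m})})\circ\pi(g)=F^{m}_{\pi(Y)}$, which is of finite index by the previous step. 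Finally, it is formal from Notation \ref{notFINDEX} that if $v\circ u$ is of finite index then so is $v$ (a faithfully flat quotient of the target of $v$ in which the image of the source of $v$ is finite is a faithfully flat quotient in which the image of the source of $v\circ u$ is, a fortiori, finite); applied with $u=\pi(g)$ and $v=\pi(f^{(p^{m})})=\pi(f)^{(p^{m})}$, this shows $\pi(f)^{(p^{m})}$ is of finite index, and since the Frobenius twist $(-)^{(p^{m})}$ is invertible and preserves faithful flatness and finiteness of $k$-group schemes ($k$ being perfect), $\pi(f)\colon\pi(X)\to\pi(Y)$ is of finite index.

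The step I expect to be the real obstacle --- indeed the technical heart of the argument --- is the finiteness of the cokernel of $F^{m}_{\pi(Z)}$ for proper $Z$, i.e. the finiteness of the maximal height-$\le m$ infinitesimal subgroup scheme of $\pi(Z)$. This is the only place where properness is genuinely used (the conclusion fails already for $Z=\mathbb A^{1}$, whose local fundamental group scheme has infinitely many independent height-one quotients, all killed by Frobenius pull-back), and the clean identification $\pi(F^{m}_{Z/k})=F^{m}_{\pi(Z)}$ also needs a little care with base points and Frobenius twists, though that is harmless since $k$ is perfect.
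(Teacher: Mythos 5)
Your overall route is the paper's: factor a power of the absolute Frobenius of $Y$ through $f$, identify $\pi(F^{m}_{Y})$ with the relative Frobenius of the group scheme $\pi(Y)$ by applying Frobenius to Nori's universal torsor, and then use the formal fact that if $v\circ u$ is of finite index then so is $v$. Your treatment of the \'etale assertion (topological invariance of the \'etale site under the universal homeomorphism $f$, giving an isomorphism of \'etale fundamental groups) and your formal deduction of the $\pi^{\rm loc}$ statement from the $\pi$ statement are both correct and, if anything, cleaner than what the paper does.

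The genuine problem is the key finiteness step. You prove that $F^{r}_{\pi(Z)}$ is of finite index by bounding its \emph{cokernel}, i.e.\ by the finiteness of the maximal height-$\le r$ quotient of $\pi(Z)$, and, as you yourself note, this requires $Z$ to be proper. But the lemma carries no properness hypothesis, and it is applied in the proof of Lemma \ref{lemCL2} to the purely inseparable morphism between \emph{open dense} subschemes $W$ and $U$, which are not proper; so restricting to proper $Z$ is not an option, and your argument as written does not prove the statement in the generality in which it is stated and used. The paper takes a different tack at exactly this point: instead of bounding the cokernel of $F^{r}_{\pi(Z)}$, it bounds the \emph{kernel} of the relative Frobenius of each faithfully flat quotient $Q$ of $\pi(Y)$ --- if $\ker F_{Q/k}$ is finite and the image of $F_{Q/k}$ is finite, then $Q$ is finite --- an argument that formally makes no use of properness, since for $Q$ of finite type the Frobenius kernel is automatically finite. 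You should engage with that version of the step rather than the cokernel version. Be aware, though, that the observation behind your $\mathbb{A}^{1}$ example cuts both ways: an infinite pro-local quotient $Q$ (such as the maximal height-one quotient of $\pi^{\rm loc}(\mathbb{A}^{1})$) has infinite Frobenius kernel, so the finiteness of $\ker F_{Q/k}$ depends on which group schemes $Q$ are allowed in Notation \ref{notFINDEX}; the difficulty you have isolated is real, but it is located in the quantification over $Q$, not resolved by adding a properness hypothesis that the application cannot supply.
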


\begin{proof}
We assume that $char(k)\,=\,p\,>\,0$. We observe that under the above assumptions the morphism
$f$ is surjective (see \cite[Ex. 5.3.9]{Li}). Let us first consider the case where $Y\,=\,X$ with $f\,:=\,F_Y$ being 
the absolute Frobenius morphism of $Y$. Let $T \,\longrightarrow\, Y$ be the universal 
$\pi(Y)$--torsor of $Y$ (it is a scheme, as all the transition morphisms
are affine), where $\pi(Y)$ is the fundamental group scheme of Nori. We set
$$T^{(p)}\,:= \, T\times_Y Y$$ via the Frobenius $F_Y$ of $Y$. As
usual, $F_{T/Y}\,:\, T\,\longrightarrow\, T^{(p)}$ is the 
relative Frobenius. The relative Frobenius commutes with base change, so if we 
pull back over $x\,:\,{\rm Spec}(k) \,\longrightarrow\, Y$ (a fixed closed point) what we obtain is the relative
Frobenius
$$ F_{\pi(Y)/{\rm Spec}(k)}\,:\, \pi(Y) \,\longrightarrow\, \pi(Y)^{(p)}
\,\simeq\, \pi(Y)\, ,$$
where the last isomorphism clearly follows from the fact that $k$ is algebraically closed, whence perfect; thus,
in particular, $$F_{T/Y}\,:\, T \,\longrightarrow\, T^{(p)}$$
is the natural morphism from the universal torsor to
the pro-finite torsor obtained after pulling back. The same holds for any
torsor: so let $P$ be a $Q$-torsor, where $\pi(Y)\,\longrightarrow\, Q$ is a
faithfully flat $k$-group scheme homomorphism; then we have the
relative Frobenius
$$ F_{Q/{\rm Spec}(k)}\,:\, Q \,\longrightarrow\, Q^{(p)}\,\simeq\, Q\, ,$$ which
factors as $Q\,\longrightarrow\, F\,\longrightarrow\, Q$ (where $Q\,
\longrightarrow\, F$ is faithfully flat and $F\,
\longrightarrow\, Q$ is a closed immersion). Since the kernel is finite, if
we assume $F$ to be finite then $Q$ is also finite thus
$F_{\pi(Y)/{\rm Spec}(k)}$ is of finite index.

Now $F_{\pi(Y)/{\rm Spec}(k)}$ is a finite endomorphism and this is sufficient to conclude that it is of finite index. As \'etale
torsors are not 
modified by the Frobenius, it follows that $ F_{\pi^{\rm \acute{e}t}(Y)/{\rm
Spec}(k)}$ is an isomorphism. What has been 
proved for $f\,=\,F_Y$ still holds, of course,
for $f\,=\,F_Y^m$, the Frobenius iterated $m$ times. So now we consider the general case where $f
\,:\,X\,\longrightarrow\,Y$ is the given purely 
inseparable morphism. Then there exist a positive integer $m$ and a morphism $h\,:\, Y\,\longrightarrow\, X$ such that 
$$f\circ h \,=\,F_Y^m \,:\, Y \,\longrightarrow\, Y$$
(the absolute Frobenius morphism iterated $m$ times). We consider the pullback 
$$T_X \,:=\, T\times_Y X$$ and the universal $\pi(X)$--torsor $P\,
\longrightarrow\, X$ on $X$. There are natural morphisms
$P\,\longrightarrow\, T_X$ and $$u\,:\,\pi(X)\,\longrightarrow\, \pi(Y)\, .$$
Pulling back further to $h\,:\, Y
\,\longrightarrow\,X$, the following factorization is obtained:
$$\xymatrix{\pi(Y) \ar@/_1pc/[rr]_{F^m_{\pi(Y)/{\rm Spec}(k)}} \ar[r] & \pi(X) \ar[r]^u & \pi(Y).}$$

The previous discussion yield the following:

\begin{itemize}
 \item if we assume that $\pi(Y)$ and $\pi(X)$ are both \'etale, this implies
that $$u
\,:\, \pi(X)\,\longrightarrow\, \pi(Y)$$ is faithfully flat;
 \item otherwise we can only conclude that $u$ is of finite index, which is all
that we can expect.
\end{itemize}
This is enough to conclude the proof.
\end{proof}

\begin{lemma}\label{lemCL2}
Let $f\,:\,X\,\longrightarrow\, Y$ be a dominant morphism between normal
integral schemes. Then the homomorphisms
$$\varphi^{\rm loc}:\pi^{\rm loc}(X)\,\longrightarrow\,
\pi^{\rm loc}(Y)\ \ \text{ and } \ \ \varphi^{\rm \acute{e}t}:\pi^{\rm \acute{e}t}(X)\,
\longrightarrow\,
\pi^{\rm \acute{e}t}(Y)$$ induced by $f$ are of finite index. 
\end{lemma}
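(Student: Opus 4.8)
I would reduce $f$ to a convenient form and then treat a finite part and a ``fibration'' part separately. Assume $\operatorname{char}(k)=p>0$ (in characteristic zero $\pi^{\rm loc}$ is trivial and the étale assertion is classical). Two formal remarks about Notation~\ref{notFINDEX} are used throughout: (i) a faithfully flat homomorphism of $k$--group schemes is of finite index; (ii) a composition of two homomorphisms of finite index is of finite index --- for (ii) one factors each homomorphism of affine $k$--group schemes as a faithfully flat morphism followed by a closed immersion and unwinds the definition. By (i)--(ii) together with Remark~\ref{remNORI} I may replace $X$ and $Y$ by dense open subschemes at will, and by Lemma~\ref{lemCLinsep} I may replace $X$ by a finite purely inseparable cover $X_1\to X$ of normal integral schemes: if $\pi^{\rm loc}(X_1)\to\pi^{\rm loc}(Y)$ and $\pi^{\rm \acute et}(X_1)\to\pi^{\rm \acute et}(Y)$ are of finite index, then so are $\varphi^{\rm loc}$ and $\varphi^{\rm \acute et}$.

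Applying standard field theory to $k(Y)\subseteq k(X)$ --- pass to the algebraic closure of $k(Y)$ in $k(X)$, then to a suitable finite purely inseparable extension of $k(X)$ which is a \emph{regular} extension of a finite purely inseparable extension of that algebraic closure --- and performing the reductions above, I may assume $f$ factors as $X\xrightarrow{g}Y_1\xrightarrow{h}Y$ with $Y_1$ normal integral, $h$ finite, and $k(X)$ a regular extension of $k(Y_1)$; in particular $k(Y_1)$ is algebraically closed in $k(X)$ and the generic fibre of $g$ is geometrically integral over $k(Y_1)$. By (ii) it suffices to handle $h$ and $g$ separately. For $h$: decompose it as $Y_1\to Y_1'\to Y$ with $Y_1\to Y_1'$ finite purely inseparable, which is of finite index on $\pi^{\rm loc}$ and $\pi^{\rm \acute et}$ by Lemma~\ref{lemCLinsep}, and $Y_1'\to Y$ finite separable; after shrinking $Y$ the latter becomes a connected finite étale cover, so Corollary~\ref{corTORS2} gives that $\pi^{\rm loc}(Y_1')\to\pi^{\rm loc}(Y)$ is faithfully flat, while on $\pi^{\rm \acute et}$ it is the inclusion of an open subgroup of finite index. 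Hence $h$ induces homomorphisms of finite index.

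The remaining case --- $g$ with geometrically integral generic fibre --- is the crux, and the statement for $\pi^{\rm loc}$ is where I expect the real work to lie; I claim that here $\varphi^{\rm loc}$ and $\varphi^{\rm \acute et}$ are in fact faithfully flat. For $\pi^{\rm \acute et}$: a connected finite étale cover $Z\to Y_1$ pulls back to $Z\times_{Y_1}X\to X$, whose ring of functions over the generic point of $Y_1$ is $k(Z)\otimes_{k(Y_1)}k(X)$; since $k(Y_1)$ is algebraically (a fortiori separably) closed in $k(X)$ and $k(Z)/k(Y_1)$ is finite separable, this tensor product is a field, so $Z\times_{Y_1}X$ is integral, in particular connected, and therefore $\varphi^{\rm \acute et}$ is surjective. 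For $\pi^{\rm loc}$: let $P\to Y_1$ be a Nori-reduced $G$--torsor with $G$ finite local, and suppose the pulled-back torsor $P\times_{Y_1}X\to X$ were not Nori-reduced, so it reduces to a proper subgroup scheme $G_1\subsetneq G$; this reduction is a $Y_1$--morphism $\phi\colon X\to P/G_1$, where $P/G_1\to Y_1$ is finite. Let $W\subseteq P/G_1$ be the scheme-theoretic image of $\phi$; then $W$ is integral (image of the integral scheme $X$) and $W\to Y_1$ is finite. Its generic fibre is the scheme-theoretic image of the generic fibre of $g$ --- a geometrically integral $k(Y_1)$--scheme whose function field $k(X)$ contains $k(Y_1)$ as an algebraically closed subfield --- inside a finite $k(Y_1)$--algebra, and hence equals ${\rm Spec}\,k(Y_1)$; so $W\to Y_1$ is a finite birational morphism of integral schemes with $Y_1$ normal, hence an isomorphism. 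This isomorphism is a section of $P/G_1\to Y_1$, i.e.\ a reduction of $P$ to $G_1\subsetneq G$ over $Y_1$, contradicting the Nori-reducedness of $P$. Therefore $\varphi^{\rm loc}$ is faithfully flat, and composing the conclusions for $g$ and $h$ completes the proof.
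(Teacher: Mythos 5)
Your proof is correct, but it follows a genuinely different route from the paper's. The paper never isolates the ``fibration part'' of $f$ at all: it picks a closed point $t$ of the generic fibre, takes its Zariski closure $T\subseteq X$ (a multisection), so that $f|_T$ is generically finite, shrinks to a finite morphism $V\to U$, factors that as generically \'etale followed by purely inseparable, and applies Corollary~\ref{corTORS2}, Lemma~\ref{lemCLinsep} and Remark~\ref{remNORI}; the finite-index property for $\varphi^{\rm loc}$ then drops out formally because the finite-index composite $\pi^{\rm loc}(V')\to\pi^{\rm loc}(Y)$ factors through $\pi^{\rm loc}(X)$ (exactly your observation that if $g\circ h$ is of finite index then so is $g$). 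You instead perform a Stein-type factorization at the level of function fields and prove the strictly stronger statement that the piece with $k(Y_1)$ algebraically closed in $k(X)$ induces \emph{faithfully flat} maps on $\pi^{\rm loc}$ and $\pi^{\rm \acute et}$; your normality argument for this (any reduction of the pulled-back torsor to $G_1\subsetneq G$ yields a $Y_1$-map $X\to P/G_1$ whose scheme-theoretic image is finite birational over the normal $Y_1$, hence a section, contradicting Nori-reducedness of $P$) is sound, and the \'etale half is the classical connectedness-of-pullback argument. The trade-off: the paper's multisection trick is shorter and avoids all the field-theoretic bookkeeping, whereas your route requires the EGA-type fact that a finite purely inseparable base change makes a finitely generated extension regular over the algebraic closure of $k(Y)$ in $k(X)$ (which you should cite or prove), plus the two formal properties of ``finite index'' from Notation~\ref{notFINDEX} that you correctly state and that do hold; in exchange you obtain a reusable intermediate result (surjectivity of $\pi^{\rm loc}$ and $\pi^{\rm \acute et}$ for morphisms whose generic fibre is geometrically integral) that the paper does not establish. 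Both arguments lean on the same three ingredients for the generically finite part --- Lemma~\ref{lemCLinsep}, Corollary~\ref{corTORS2} and Remark~\ref{remNORI} --- merely composed in a different order.
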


\begin{proof}
This is inspired by \cite[Lemme 2]{CL} (see also \cite[Lemme 4.4.17]{De}
for the zero characteristic case). 

Let $t$ be a closed point of the generic fiber of $f$, and let $T$ denote its Zariski
closure in $X$. The morphism $f$ induces a generically finite morphism $f_{\vert T}\,
:\,T \longrightarrow\, Y$: indeed its generic fiber has relative dimension
zero and it this thus a finite number of points; therefore, there exists an
open dense subscheme $U\,\subseteq\, Y$ such that
$$f'\,:\,V\,\longrightarrow\, U\, ,$$
where $V\,:=\,T\times_Y U$, is a finite morphism. Hence there exist a scheme $W$ and two
finite morphisms
$$e\,:\,V\,\longrightarrow\, W\ \ \text{ and }\ \ i\,:\,W\,\longrightarrow\, V$$ such that $i\circ e\,=\,f'$, where $i$ is purely inseparable
and $e$ is generically \'etale. This implies
that there exists an open dense subscheme $W'\,\subseteq\, W$ such that
$$e'\,:\,V'\,\longrightarrow\, W'\, ,$$ where $V':=V\times_W W'$, is a finite \'etale cover. In what follows 
we study the morphism
$$\varphi^{\rm loc}\,:\,\pi^{\rm loc}(X)\,\longrightarrow\,
\pi^{\rm loc}(Y)\, ,$$ the (similar) details for 
$\varphi^{\rm \acute{e}t}$ are left to the reader. By Corollary
\ref{corTORS2} the morphism $\pi^{\rm loc}(V')\,\longrightarrow\,
\pi^{\rm loc}(W')$ induced by $e'$ is faithfully flat while the morphism 
$$\pi^{\rm loc}(W)\,\longrightarrow\, \pi^{\rm loc}(U)$$
induced by $i$ is of finite index by Lemma \ref{lemCLinsep} and clearly 
$\pi^{\rm loc}(W')\,\longrightarrow\, \pi^{\rm loc}(W)$ is faithfully flat (see Remark \ref{remNORI})
so the composition $$u:\pi^{\rm loc}(V')\,\longrightarrow\, \pi^{\rm loc}(U)$$ is of finite index. Now
we have the diagram of homomorphisms
of local fundamental group schemes 
$$\xymatrix{\pi^{\rm{loc}}(V')\ar[r]\ar[d]_{u} & \pi^{\rm{loc}}(X)\ar[d]^{\varphi^{\rm loc}}
\\ \pi^{\rm{loc}}(U)\ar[r]^{v} & \pi^{\rm{loc}}(Y). }$$
Now $u$ is of finite index and the homomorphism $v$ is
faithfully flat (see, again, Remark \ref{remNORI}). Hence $\varphi^{\rm loc}$ is finite index.
\end{proof}

In \cite{MS} Mehta and
Subramanian proved that $$\pi(X\times Y)\,=\, \pi(X)\times \pi(Y)$$ for two connected, proper and reduced schemes 
$X$ and $Y$. If one of the two schemes ($X$ or $Y$) is not proper anymore then the previous formula may not hold. 
However a weaker result will be sufficient for our purposes; the following proposition can be found in 
\cite{N2}, Chapter II, Proposition 9, here we suggest a different approach :

\begin{proposition}\label{propNEWPROD}
Let $Y$ be an integral scheme over $k$, then the homomorphism $$\varphi\,:\,
\pi(\mathbb{P}^1\times Y)\,\longrightarrow\, \pi(Y)$$ induced by 
the projection $p_2\,:\,\mathbb{P}^1\times Y\,\longrightarrow\, Y$ is an
isomorphism.
\end{proposition}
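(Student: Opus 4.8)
The plan is to prove that $\varphi$ is simultaneously faithfully flat and a monomorphism; an affine $k$--group scheme homomorphism with both properties is an isomorphism. Faithful flatness is immediate: the marked point $0\in\mathbb{P}^1(k)$ gives a section $s\colon Y\to\mathbb{P}^1\times Y$, $y\mapsto(0,y)$, of $p_2$, sending marked point to marked point, with $p_2\circ s=\mathrm{id}_Y$, so $\varphi\circ\pi(s)=\mathrm{id}_{\pi(Y)}$; hence the Hopf algebra map $\mathcal{O}(\pi(Y))\to\mathcal{O}(\pi(\mathbb{P}^1\times Y))$ induced by $\varphi$ is split injective, and $\varphi$ is faithfully flat.

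For the monomorphism property, since $\pi(\mathbb{P}^1\times Y)$ is the inverse limit of its finite quotients, whose projection kernels intersect trivially, it suffices to show that every finite pointed $G$--torsor $q\colon Z\to\mathbb{P}^1\times Y$ is isomorphic, as a pointed $G$--torsor, to $p_2^{*}W$ for some pointed $G$--torsor $W\to Y$: then the classifying homomorphism $\pi(\mathbb{P}^1\times Y)\twoheadrightarrow G$ of $Z$ factors through $\varphi$, so $\ker\varphi$ lies in every open normal subgroup scheme and is trivial. To construct $W$, write $Z=\underline{\mathrm{Spec}}(\mathcal{A})$ with $\mathcal{A}$ a finite locally free $\mathcal{O}_{\mathbb{P}^1\times Y}$--algebra of rank $\lvert G\rvert$, carrying the $k[G]$--comodule structure that encodes the $G$--action. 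For every $y\in Y$ the fibre $Z_y\to\mathbb{P}^1_{k(y)}$ is a $G$--torsor, which becomes trivial after base change to an algebraic closure of $k(y)$ by Nori's result that every finite torsor on $\mathbb{P}^1$ over an algebraically closed field is trivial \cite{N2}; hence $\mathcal{A}$ restricts to the trivial bundle on each fibre of $p_2$, so $h^0(\mathbb{P}^1_{k(y)},\mathcal{A}|_{\mathbb{P}^1_{k(y)}})=\lvert G\rvert$ for all $y$. Since $Y$ is reduced, cohomology and base change shows that $\mathcal{A}_0:=(p_2)_*\mathcal{A}$ is locally free of rank $\lvert G\rvert$, its formation commutes with base change, and the canonical map $p_2^{*}\mathcal{A}_0\to\mathcal{A}$ --- an isomorphism on each fibre between $Y$--flat sheaves --- is an isomorphism. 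As $(p_2)_*$ and $p_2^{*}$ commute with $-\otimes_k k[G]$ (because $k[G]$ is finite over $k$), pushing forward the algebra and comodule structure of $\mathcal{A}$ makes $\mathcal{A}_0$ a $k[G]$--comodule $\mathcal{O}_Y$--algebra; setting $W:=\underline{\mathrm{Spec}}_Y(\mathcal{A}_0)$ one obtains $p_2^{*}W\cong Z$ as $G$--schemes over $\mathbb{P}^1\times Y$. Since $p_2$ is faithfully flat and $Z\to\mathbb{P}^1\times Y$ is a $G$--torsor, faithfully flat descent along $p_2$ shows $W\to Y$ is a $G$--torsor; endowing $W$ with the marked point lying below that of $Z$ (which lies over $\{0\}\times Y$, where $Z$ restricts to $W$) makes $p_2^{*}W\cong Z$ an isomorphism of pointed $G$--torsors, as needed.

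I expect the heart of the argument to be the identification $p_2^{*}\mathcal{A}_0\xrightarrow{\ \sim\ }\mathcal{A}$; once this is available, everything else is formal. Its only non-formal ingredient is Nori's triviality of finite torsors on $\mathbb{P}^1$ over an algebraically closed field, used fibrewise, together with standard cohomology and base change. The one step that genuinely requires care is the descent --- verifying that the comodule algebra structure really descends to $\mathcal{A}_0$ and that the resulting $W$ is again a torsor --- which is handled throughout by faithfully flat descent along the projection $p_2$.
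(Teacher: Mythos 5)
Your proof is correct and follows essentially the same route as the paper: faithful flatness from the section of $p_2$, and then descent of every finite $G$--torsor along $p_2$ using the triviality of finite torsors (equivalently, of essentially finite bundles) on $\mathbb{P}^1$ over an algebraically closed field together with cohomology and base change. The only difference is one of packaging --- you push forward the torsor's structure sheaf $\mathcal{A}$ with its comodule--algebra structure and invoke fppf descent, whereas the paper pushes forward the fiber functor $F_T$ on $Rep_k(G)$ and reassembles the torsor Tannakian-style --- but the key computation, namely that $(p_2)^*(p_2)_*F_T(V)\to F_T(V)$ is an isomorphism because the bundles are trivial on the $\mathbb{P}^1$--fibers, is identical.
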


\begin{proof}
It is clear that 
$\varphi$ is faithfully flat as $p_2$ has a section, so we only need to prove
that given a finite $k$--group scheme 
$G$ and a $G$--torsor $T\,\longrightarrow\, \mathbb{P}^1\times Y$, there exists a
$G$--torsor $$T'\,\longrightarrow\,
U$$ whose pullback to $\mathbb{P}^1\times Y$ is the given one. First we briefly recall that the fundamental group scheme of $Y$ at a $k$-point $y$ is the automorphism group scheme of the fiber functor $y^{\ast}$ on the category of essentially finite vector
bundles, as described in \cite{N1}. Let $Rep_k(G)$ 
denote the category of $k$--linear finite dimensional representations of $G$. Then 
associated to our $G$--torsor $T\,\longrightarrow\, \mathbb{P}^1\times Y$ there is a
fiber functor 
$$F_T\,:\,Rep_k(G)\,\longrightarrow\, \mathcal{Q}coh(\mathbb{P}^1\times Y)$$ by a
fundamental result in 
Tannakian theory (recalled for instance in \cite[Proposition (2.9)]{N1}). From this 
we will construct a functor $$F\,:\,Rep_k(G)\,\longrightarrow\,
\mathcal{Q}coh(Y)\, .$$ For 
any $G$--module $V$, set $$F(V)\,=\,(p_2)_*(F_T(V))\, .$$ We first observe that $F(V)$ is a 
vector bundle: when restricted to $\mathbb{P}^1$, clearly $F_T(V)$ is an essentially finite 
vector bundle over the projective line, thus trivial, whence $H^1(\mathbb{P}^1, 
F_T(V))\,=\,0$, and the evaluation homomorphism $$(p_2)^*(p_2)_* (F_T(V))
\,\longrightarrow\, (F_T(V))$$ is an isomorphism.
Moreover $F$ is compatible with the operations of taking tensor 
products, direct sums and duals. Hence $F$ is a fiber functor and we can 
associate to it a $G$--torsor $T'\,\longrightarrow\, U$ which is the desired one
since its pullback to $\mathbb{P}^1\times Y$ is isomorphic to
$T\,\longrightarrow\, \mathbb{P}^1\times Y$.
\end{proof}

We now recall that for any $0\, \leq\, \,\nu\, \leq\, \, \dim(X)$, 
there is a point in $X(\Omega)$, where $\Omega$ in the
algebraic closure of the function field of $X$, whose Zariski closure in $X$
is of dimension $\nu$.

\begin{theorem}\label{teoMAIN}Let $k$ be an algebraically closed field and $X$
a normal, rationally chain connected $k$--scheme. Then $\pi^{\rm loc}(X)$ is finite.
\end{theorem}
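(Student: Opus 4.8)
The plan is to mimic the strategy of Chambert-Loir's proof of finiteness of the \'etale fundamental group, replacing \'etale covers by local torsors and using the preliminary lemmas just established. First I would reduce to the case where $X$ is proper: since $X$ is normal and integral, removing the singular locus or a closed subset changes nothing by Remark \ref{remNORI}, and in any case the hypothesis already places us with $X$ proper (a rationally chain connected variety is proper by our conventions). So assume $X$ is proper, normal and integral over $k$.

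Next I would set up an induction on $\dim X$. The base case $\dim X\, =\, 0$ is trivial. For the inductive step, pick a rational curve $F_\Omega\colon \mathbb{P}^1_\Omega\,\longrightarrow\, X_\Omega$ joining two general points whose Zariski closures $V_0$ and $V_\infty$ in $X$ have prescribed dimensions: using the remark at the end of the excerpt, I would arrange $V_\infty$ to be a point (dimension $0$) so that $F_\infty$ is defined over all of $X$, while $V_0$ is a divisor, i.e.\ $\dim V_0\, =\, \dim X - 1$; by rational chain connectedness such curves exist after passing to a suitable algebraically closed $\Omega$. Then Lemma \ref{lemCL1} produces a normal integral $k$-scheme $T$ and a morphism $F\colon \mathbb{P}^1_T\,\longrightarrow\, X$ with $F_0\colon T\,\longrightarrow\, X$ dominant onto $V_0$ and $F_\infty\colon T\,\longrightarrow\, X$ dominant onto $V_\infty$. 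Since $\dim V_0\, =\,\dim X-1 \,<\, \dim X$, the induction hypothesis applies to (the normalization of) $V_0$, so $\pi^{\rm loc}(V_0)$ is finite; and $V_\infty$ being a point has trivial $\pi^{\rm loc}$.

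The heart of the argument is then a diagram chase with the homomorphisms of local fundamental group schemes induced by $F$, $F_0$, $F_\infty$ and the projection $p_2\colon \mathbb{P}^1_T\,\longrightarrow\, T$. By Proposition \ref{propNEWPROD}, $\pi(\mathbb{P}^1\times T)\,\simeq\,\pi(T)$, hence $\pi^{\rm loc}(\mathbb{P}^1_T)\,\simeq\,\pi^{\rm loc}(T)$, and the two sections $0,\infty\colon T\,\longrightarrow\,\mathbb{P}^1_T$ induce the same homomorphism $\pi^{\rm loc}(T)\,\longrightarrow\,\pi^{\rm loc}(\mathbb{P}^1_T)$ up to this identification. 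Composing with $F$ and using $F\circ(0) \,=\, F_0$, $F\circ(\infty)\, =\, F_\infty$, I get that $F_{0\ast}$ and $F_{\infty\ast}$ factor through the \emph{same} homomorphism $\pi^{\rm loc}(T)\,\longrightarrow\,\pi^{\rm loc}(\mathbb{P}^1_T)\,\longrightarrow\,\pi^{\rm loc}(X)$. Now $F_\infty\colon T\,\longrightarrow\, X$ is dominant, so by Lemma \ref{lemCL2} the homomorphism $\pi^{\rm loc}(T)\,\longrightarrow\,\pi^{\rm loc}(X)$ it induces is of finite index; since this homomorphism equals the one induced by $F_0$, and $F_0$ factors through $V_0$ (up to normalization), its image lands in the image of $\pi^{\rm loc}(\widetilde V_0)\,\longrightarrow\,\pi^{\rm loc}(X)$, which is finite by the induction hypothesis. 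By the definition of finite index (Notation \ref{notFINDEX}), a finite-index homomorphism with finite image forces $\pi^{\rm loc}(X)$ itself to be finite — one applies the definition to the faithfully flat quotient $\pi^{\rm loc}(X)\,\longrightarrow\,\pi^{\rm loc}(X)$ (identity) and observes the image of $\pi^{\rm loc}(T)$ is finite.

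The step I expect to be the main obstacle is making precise the claim that $F_{0\ast}$ and $F_{\infty\ast}$ induce the same homomorphism on $\pi^{\rm loc}$: this requires carefully identifying $\pi^{\rm loc}(\mathbb{P}^1_T)$ with $\pi^{\rm loc}(T)$ via Proposition \ref{propNEWPROD} and checking that under this identification both the zero and infinity sections become the identity — intuitively true because $\mathbb{P}^1$ is $\pi^{\rm loc}$-trivial and the two sections are ``homotopic'' through $\mathbb{P}^1_T$, but it must be argued at the level of torsors (any local torsor on $\mathbb{P}^1_T$ descends to $T$ and its pullbacks along the two sections agree). A secondary subtlety is base-point bookkeeping and the passage to normalizations of $V_0$ and $V_\infty$, which must be done compatibly with the chosen $k$-points; since purely inseparable issues are handled by Lemma \ref{lemCLinsep} and normalization is a finite birational — hence, away from a closed subset, an isomorphism — morphism, this is routine but needs to be stated. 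Once these points are in place, the induction closes and $\pi^{\rm loc}(X)$ is finite, which together with Chambert-Loir's theorem on $\pi^{\rm \acute{e}t}$ and Lemma \ref{lemGROUPS} gives finiteness of the full $\pi(X,x)$.
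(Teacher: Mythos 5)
Your diagram-chase mechanism is the right one: identifying $F_{0\ast}$ with $F_{\infty\ast}$ through Proposition \ref{propNEWPROD}, getting finite index from Lemma \ref{lemCL2}, and closing with Notation \ref{notFINDEX} applied to a faithfully flat quotient is exactly the engine of the paper's proof. But your setup of the induction has two genuine gaps. First, rational \emph{chain} connectedness only provides a connected chain of rational curves between two points, not a single $\mathbb{P}^1$; so you cannot pick one rational curve $F_\Omega$ whose two endpoints specialize to subvarieties of prescribed dimensions (a point and a divisor, or a point and all of $X$) --- that is the strictly stronger hypothesis of rational connectedness. Your write-up is also internally inconsistent at this spot: you declare $V_\infty$ to be a point and then invoke Lemma \ref{lemCL2} for ``$F_\infty\colon T\longrightarrow X$ dominant,'' which requires $V_\infty=X$. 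Second, and more seriously, your induction on $\dim X$ needs $\pi^{\rm loc}(V_0)$ itself to be finite, hence needs the $(\dim X-1)$-dimensional subvariety $V_0$ to be rationally chain connected so that the inductive hypothesis applies. Nothing guarantees that the Zariski closure of a point lying on a rational curve is rationally chain connected (a divisor in $\mathbb{P}^2$ can be a high-genus curve), so $\pi^{\rm loc}(V_0)$ may well be infinite and the induction does not close.

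The paper avoids both problems by inducting along the chain rather than on dimension. It connects a $k$-point $x_0$ to the generic point of $X$ by a chain of rational curves, obtains via Lemma \ref{lemCL1} subvarieties $V_0=\{x_0\}, V_1,\dots,V_m=X$ of \emph{uncontrolled} dimensions together with families $F^i\colon\mathbb{P}^1\times T_i\longrightarrow X$ joining consecutive ones, and at each step proves only that the \emph{image} of $\pi^{\rm loc}(V_{i+1}')$ in $\pi^{\rm loc}(X)$ is finite --- never that $\pi^{\rm loc}(V_{i+1}')$ itself is finite. The base case is the trivially finite image of $\pi^{\rm loc}(V_0)=0$, and the propagation step is precisely your chase: the two evaluation maps of the $i$-th family induce the same homomorphism $\pi^{\rm loc}(T_i)\longrightarrow\pi^{\rm loc}(X)$, one side factors (after shrinking to normal opens and using Remark \ref{remNORI}) through $\pi^{\rm loc}(V_i')$ whose image is finite by the previous step, and the other side is of finite index over $\pi^{\rm loc}(V_{i+1}')$ by Lemma \ref{lemCL2}, so Notation \ref{notFINDEX} applied to the faithfully flat quotient of $\pi^{\rm loc}(V_{i+1}')$ onto its image in $\pi^{\rm loc}(X)$ gives finiteness of that image. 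The final step $V_m=X$ yields the theorem. Reorganize your argument along the chain in this way; the individual lemmas you cite are exactly the right ones.
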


\begin{proof}
Since $X$ is rationally chain connected, there exists a chain of rational curves
connecting a rational point $x_0\,\in\, X(k)$ to a generic point $x_m\,\in\, X(\Omega)$,
where $\Omega$ is the algebraic closure of the function field of $X$. According to Lemma
\ref{lemCL1} there exists a sequence of integral subvarieties $V_0\, , \cdots\, , V_m$
of $X$ where $V_0\,=\,x_0$ and $V_m\,=\,X$ and for every integer $i\,\in\,
\{0, \cdots , m-1\}$ a family of rational curves
$$
F^i\,:\,\mathbb{P}^1_k\times T_i \,\longrightarrow\, X
$$
with $T_i$ normal and projective, such that the morphisms
$$F^i_0\,:\,T_i\,\longrightarrow
\, X \ \ \text{ and }\ \ F^i_{\infty}\,:\, T_i\,\longrightarrow\, X\, ,$$
defined by $F^i_0(t)\,:=\,
F^i(0,t)$ and $F^i_{\infty}(t)\,:=\,F^i(\infty,t)$, are dominant on $V_i$ and $V_{i+1}$
respectively. If $V_i$ is not normal then we can consider an open normal
subscheme $V_i'\,\subset\, V_i$ and the pullback 
$$\xymatrix{T_i'\ar[r]\ar[d] & V_i'\ar[d] \\ T_i\ar[r] & V_i. }$$
In a similar way, if $V_{i+1}$ is not normal then we can consider an open normal
subscheme $V_{i+1}'\,\subset\, V_{i+1}$ and its pullback, as before, that we will call
$T_i^{''}$. This will not affect $V_0$ and $V_m$ of course. So this induces the
following commutative diagram on local group schemes:
$$\xymatrix{ & & \pi^{\text{loc}}(T_i')\ar[ld]_{\alpha}\ar[rd]^{u}
 & & \\ & \pi^{\text{loc}}(T_i)\ar[ld]_{\beta} & &
\pi^{\text{loc}}(V_i')\ar[dr]^{v} & \\ \pi^{\text{loc}}(\mathbb{P}^1_k\times
T_i) \ar[rrrr]^{\pi(F^i)} & & & & \pi^{\text{loc}}(X) \\ &
\pi^{\text{loc}}(T_i)\ar[lu]^{\gamma} & & \pi^{\text{loc}}(V_{i+1}')\ar[ru]_{w}
& \\ & & \pi^{\text{loc}}(T_i^{''})\ar[lu]^{\delta}\ar[ru]_{z} & & }$$
We avoid to put the index $i$ on the morphisms not to make notation too
heavy. We know that $\pi^{\text{loc}}(V_0)\,=\,0$, both $u$ and $z$ are of finite index by Lemma \ref{lemCL2}, 
both $\alpha$ and $\delta$ are faithfully flat by 
Remark \ref{remNORI} and both $\beta$ and $\gamma$ are isomorphisms by Proposition 
\ref{propNEWPROD}. So at each step we prove that the image of
$\pi^{\text{loc}}(V_{i+1}')$ in $\pi^{\text{loc}}(X)$ is finite. The last step
will finally prove that $\pi^{\text{loc}}(X)$ is finite.
\end{proof}

\begin{lemma}\label{lemLAST}Let $X$ be a rationally chain connected variety and let $f : Y \longrightarrow X$ be an \'etale Galois cover. Then $Y$ is rationally chain connected.
\end{lemma}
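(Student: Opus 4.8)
The plan is to lift chains of rational curves from $X$ to $Y$ using the fact that rational curves have trivial (étale) fundamental group. Fix two points $y_1, y_2 \in Y(\Omega)$ and set $x_i := f(y_i)$. Since $X$ is rationally chain connected, there is a connected chain of rational curves $C = C_1 \cup \cdots \cup C_r \subset X_\Omega$, with each $C_j$ the image of a map $\mathbb{P}^1_\Omega \to X_\Omega$, such that $x_1 \in C_1$ and $x_2 \in C_r$ and consecutive components meet. First I would reduce to the case of a single rational curve through two prescribed points by an induction on the length $r$ of the chain: if I can connect $y_1$ to any point in $f^{-1}(p)$ for $p$ a node of the chain, and connect points of $f^{-1}(p)$ to $y_2$, then I can splice these lifted chains together (using that $f$ being a cover, the preimage of a connected chain is a chain, just possibly with more components). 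So it suffices to show: given a nonconstant $g : \mathbb{P}^1_\Omega \to X_\Omega$ and a point $y$ over $g(0)$, the normalization of the curve $f^{-1}(g(\mathbb{P}^1))$ restricted to the component containing $y$ is again rational and surjects onto $g(\mathbb{P}^1)$ — more precisely, I want a rational curve in $Y_\Omega$ through $y$ mapping onto $C_j$.

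Next I would make the curve-lifting step precise. Normalizing, let $n : \mathbb{P}^1_\Omega \to C_j \hookrightarrow X_\Omega$ be the normalization composed with inclusion, and form the pullback $\widetilde{Y} := \mathbb{P}^1_\Omega \times_{X_\Omega} Y$. This is a finite étale cover of $\mathbb{P}^1_\Omega$ (base change of the étale $f$). But $\mathbb{P}^1_\Omega$ is simply connected: $\pi_1^{\text{ét}}(\mathbb{P}^1_\Omega) = 0$ since $\Omega$ is algebraically closed, so every connected component of $\widetilde{Y}$ maps isomorphically to $\mathbb{P}^1_\Omega$. Pick the component containing the chosen preimage of $(0, y)$; this gives a section $s : \mathbb{P}^1_\Omega \to \widetilde Y$, and composing with $\widetilde Y \to Y$ produces a map $\mathbb{P}^1_\Omega \to Y_\Omega$ lifting $n$ and hitting $y$. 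Its image is a rational curve in $Y_\Omega$ lying over $C_j$. Doing this component by component along the chain in $X_\Omega$, and keeping track of which preimages of the nodes get connected, assembles a connected chain of rational curves in $Y_\Omega$ joining $y_1$ to a point of $f^{-1}(x_2)$. Since $f$ is Galois, the Galois group acts transitively on $f^{-1}(x_2)$, and an element of the deck group carries such a chain to one ending at $y_2$; but in fact one can instead just run the argument with both endpoints prescribed from the start, since at the very last component of the chain we are free to pick the lift landing at $y_2$ exactly when $y_2$ lies over the relevant node — so a small bookkeeping argument, choosing compatible lifts of the nodes, finishes it.

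The main obstacle is the combinatorics of matching up preimages of the nodes: when a node $p$ of the chain in $X_\Omega$ has several preimages in $Y$, lifting the two adjacent rational curves might land in different preimages of $p$, so the lifted curves need not meet. The remedy is that $f^{-1}$ of a connected set is not necessarily connected, but $f^{-1}$ of the whole chain $C$ \emph{is} a disjoint union of chains (being a finite cover of a connected chain, each component is a connected finite cover, hence again a nodal chain of $\mathbb{P}^1$'s by the simply-connectedness of each rational component and the local structure at nodes), and the component of $f^{-1}(C)$ through $y_1$ is such a chain; it contains some point of $f^{-1}(x_2)$, and one transports by the Galois action. One should also note the harmless point that $Y$ is proper over $k$ (finite over the proper $X$) and normal, so "rationally chain connected" is being applied to a legitimate proper variety; and the definition requires the statement for \emph{every} algebraically closed $\Omega \supseteq k$, which is exactly how the argument above was set up.
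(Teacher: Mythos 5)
There is a genuine gap at the final step, and it is precisely the hard point of the lemma. Your lifting argument is correct as far as it goes: since a chain $C = C_1\cup\dots\cup C_r$ (a tree of projective lines) is simply connected, the connected component of $f^{-1}(C)$ containing $y_1$ maps \emph{isomorphically} onto $C$, so the lift of the chain through $y_1$ is unique and its endpoint over $x_2$ is a single, completely determined point $y_2'\in f^{-1}(x_2)$. You therefore have no freedom to arrange $y_2'=y_2$: ``choosing compatible lifts of the nodes'' is not available, because once the lift at $y_1$ is fixed every subsequent lift is forced. The Galois transport does not repair this either: an element $\sigma$ of the deck group with $\sigma(y_2')=y_2$ carries your lifted chain to one joining $\sigma(y_1)$ to $y_2$, and connecting $y_1$ to $\sigma(y_1)$ is the same problem one step removed. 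What you have actually proved is that the rational-chain-equivalence class of $y_1$ in $Y$ surjects onto $X$; this is a useful intermediate fact but strictly weaker than the lemma.

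The paper closes exactly this gap by a global topological argument rather than by bookkeeping: it considers the set $U_y\subset Y$ of points chain-connected to a fixed $y$, uses your lifting observation together with the chain connectedness of $X$ to show that $U_y$ is both open and closed, and concludes $U_y=Y$ from the connectedness of $Y$. (That open-and-closed assertion is itself stated tersely; it rests on the equivalence classes being finite in number and closed, the latter coming from the fact that for a proper rationally chain connected variety the connecting chains may be taken from a bounded family.) Some input of this kind --- finitely many equivalence classes partitioning the connected variety $Y$ into closed subsets --- is needed; a purely curve-by-curve lifting argument cannot single out the prescribed point $y_2$ inside the fiber $f^{-1}(x_2)$.
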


\begin{proof}
Fix a point $y$ of $Y$. Let $U_y$ be the subset of $Y$ that is rationally chain connected to the point $y$. Let $\Omega$ be any algebraically closed field containing $k$, then any morphism  $g : \mathbb{P}^1_{\Omega}\longrightarrow X_{\Omega}$ lifts to a morphism
$g' : \mathbb{P}^1_{\Omega}\longrightarrow Y_{\Omega}$ using the homotopy lifting property  because $\mathbb{P}^1_{\Omega}$ is simply connected. Since $X$ is also rationally chain connected, these two together imply that
$U_y$ is both open and closed. Hence $U_y = Y$, and $Y$ is rationally chain connected.
\end{proof}

\begin{remark}\label{remFANO}
Let notations be as in Theorem \ref{teoMAIN}, then from \cite[Th\'eor\`eme]{CL} and Lemma \ref{lemLAST} we also obtain that $\pi(X)$ is finite. If $char(k)\,=\,p\,>\,0$, and $X$ is moreover smooth and proper, then $\vert\pi(X)^{\rm \acute{e}t}\vert$ is coprime to $p$, as proved in \cite{CL2}. In particular all this holds when $X$ is a Fano variety since in this case it is rationally chain connected (cf. \cite{Ca} and \cite{KMM}). Furthermore when $X$ is a general hypersurface of a projective space then then it is separably
rationally connected, and by \cite{Bi} this implies that $\pi(X)\,=\,0$.
\end{remark}

\section*{Acknowledgments}

We thank Antoine Chambert-Loir for a useful communication. We thank the two
referees for comments that helped us in improving the paper. The first-named author would like to thank T.I.F.R. for 
its hospitality and Cinzia Casagrande for useful discussion. The second-named author 
thanks Universit\'e Lille 1 and Niels Borne for hospitality. He also acknowledges the 
support of a J. C. Bose Fellowship.

\end{document}